\renewcommand{\leq}{\leqslant}
\renewcommand{\geq}{\geqslant}
\newcommand\NN{\mathbb{N}}
\newcommand\CC{\mathbb{C}}
\newcommand\ZZ{\mathbb{Z}}
\newcommand\RR{\mathbb{R}}
\newcommand\QQ{\mathbb{Q}}
\DeclareMathOperator{\Fin}{Fin}
\newtheorem{theorem}{Theorem}[section]
\newtheorem{corollary}[theorem]{Corollary}
\newtheorem{lemma}[theorem]{Lemma}
\newtheorem{proposition}[theorem]{Proposition}
\theoremstyle{definition}
\newtheorem{definition}[theorem]{Definition}
\newtheorem{remark}[theorem]{Remark}
\numberwithin{equation}{section}
\title{Gowers norms for linearly recurrent numeration systems}
\author{Pascal Jelinek}
\email{pascal.jelinek@unileoben.ac.at}
\address{
	Department Mathematics and Information Technology,
	Leoben University of Technology,
	Franz-Josef-Strasse 18, 8700 Leoben, Austria.\
}
\begin{document}
\begin{abstract}
	Gowers norms have been a key component in the proofs of many breakthrough results in connection to the sum of digits function. Spiegelhofer has used them to show that the Thue-Morse sequence has level of distribution 1 and also that it is equidistributed along cubes. Recently Gowers norms have been used to study the sum of digits function of the Zeckendorf expansion of primes. In this paper we unify the treatments of Gowers norms and give Gowers norms type estimates for a large class of linearly recurrent numeration systems.
\end{abstract}
\maketitle
\renewcommand{\thefootnote}{\fnsymbol{footnote}}
\footnotetext{\emph{2020 Mathematics Subject Classification.} Primary: 11B30, 11A63; Secondary: 11L07, 11B85}
\footnotetext{\emph{Key words and phrases.}  Sum of digits, Gowers norms, Linearly recurrent numeration systems}
\footnotetext{ The author was supported by the Austrian Science Fund (FWF), Project P36137-N.}
\renewcommand{\thefootnote}{\arabic{footnote}}
\tableofcontents
\section{Introduction}
Let $G=(G_i)_{i\in\NN}$ be a strictly increasing sequence of positive integers with $G_0=1$. We can define a numeration system based on the following greedy algorithm	: Let $n$ be a natural number, suppose $G_i\leq n < G_{i+1}$. Let $\delta_i(n)=\lfloor n/G_i \rfloor$. We notice that $n-\delta_{i}(n)G_i<G_i$ and hence we can choose $\delta_{i-1}(n), \dots, \delta_{0}(n)$ similarly and get that
\[
	n = \delta_{0}(n)G_0 + \dots + \delta_{i}(n)G_i.
\]
We also note that the assumptions that $G$ is strictly increasing and that $G_0=1$ ensure that this expansion exists and is unique for any positive integer $n$. We define the sum of digits function associated to $G$ to be
\[
	s_G(n)=\sum_{i=0}\delta_i(n).
\]
We further define the truncated sum of digits function associated to $G$ to be
\begin{equation}
	s_{G,\lambda} = \sum_{i=0}^{\lambda}\delta_{i}(n)
\end{equation}
for any integer $\lambda>0$.
From now on we want to focus on a special class of such sequences $G$, namely linearly recurrent sequences. We will use a slight generalization of the definition that is used in \cite{LambergerThuswaldner2003} and \cite{MadritschThuswaldner2022}.
\begin{definition}[Linearly recurrent numeration system]
	We will say that a strictly increasing sequence $(G_i)_{i\in\NN}$ defines a linearly recurrent numeration system, if there exist $a_1,\dots,a_d\in \NN$, with $a_d>0$ such that
	\begin{itemize}
		\item $G_0=1$ and $1<G_1<\dots<G_{d-1}$ are positive integers,
		\item $G_{n}=a_1G_{n-1}+\dots+a_dG_{n-d}$ holds for each $n\geq d$,
		\item $(a_{\ell},\dots, a_d) \leq (a_1,\dots,a_{d-\ell+1})$ for each $1<\ell\leq d$, with regard to the lexicographic order.
	\end{itemize}
\end{definition}
The characteristic polynomial associated to this numeration system is $X^d-a_1X^{d-1}+\dots+a_{d-1}X+a_d$ and it has a unique dominant real root $\beta$ \cite{DrmotaGajdosik1998a}. If all conjugates of $\beta$ have absolute value $< 1$, then we say $\beta$ is Pisot.

We remark that the third condition in the definition above is also know as the Parry condition, first introduced by Parry in \cite{Parry1960}. He showed that in the related $\beta$-expansion, where $\beta$ is the dominant root from above, the strings of length $d$ that can occur are exactly those that are lexicographically below $(a_1,\dots,a_d)$. Steiner \cite{Steiner2000} showed that this condition also fully determines all possible strings for the representation of integers in this linearly recurrent base, with starting conditions $G_0=1$ and $G_i=a_{i-1}G_{i-1}+\dots + a_0G_0$ for $i<d$.

\subsection{Related works and statements of results}
The study of different numeration systems and their properties has been an area of active research for many decades. The simplest setting consists of studying the representations of positive integers in base $b$. In 1968, Gelfond \cite{Gelfond1968} proved that the sum of digits function for the base $q$ expansion is equidistributed modulo $m$. He also stated the following three problems, which have been the starting point of many developments in this area.\\
Show that each of the following sets is equidistributed modulo $m$:
\begin{enumerate}
	\item $(s_{q_1}(n),\dots,s_{q_k}(n))$, for coprime bases $q_1,\dots,q_k$
	\item $s_q(p)$, where $p$ is a prime
	\item $s_q(P(n))$, where $P$ is a polynomial supported on the natural numbers
\end{enumerate}
Only the first two problems have been solved, the first one by Kim \cite{Kim1999} and the second one by Maduit and Rivat \cite{MauduitRivat2010}. Maduit and Rivat \cite{MauduitRivat2009} further proved the third statement for quadratic polynomials, using a Gowers-3-norm. This was the first time a higher correlation measure was used in a digital problem. Following the work of Konieczny \cite{Konieczny2019}, who showed that all Gowers norms of the sum of digits function in base 2 modulo 2 are small, Spiegelhofer managed to use this result to show many breakthrough results, such as proving that the Thue--Morse sequence (which is equivalent to the sum of digits function in base 2 modulo 2) has level of distribution 1 \cite{Spiegelhofer2020}, and is equidistributed along cubes \cite{Spiegelhofer2023}. Recently in 2025, Toumi \cite{Toumi2025} generalized the level of distribution and the Gowers norm results to the sum of digits function in any integer base $b>1$ modulo an arbitrary integer $m$. 

Regarding progress in the setting of linearly recurrent numeration systems in the classical sense, many problems seem to be much harder.
An asymptotic expression for the function $s_G$ was found by Peth\H{o} and Tichy \cite{PethoTichy1989}, while Grabner and Tichy \cite{GrabnerTichy1990} proved that $\theta s_G(n)$ is equidistributed modulo 1 for each $\theta \in \RR \backslash \QQ$ using analytic methods. Drmota and Gajdosik \cite{DrmotaGajdosik1998a} showed a local limit law regarding the function $s_G$. In 2022, Madritsch and Thuswaldner \cite{MadritschThuswaldner2022} showed that each function $s_G$ corresponding to a linearly recurrent numeration system has level of distribution strictly greater than 1/2, and the level of distribution tends to 1 as $a_1$ tends to infinity.

In the special case of the Zeckendorf expansion, which was introduced by Zeckendorf \cite{Zeckendorf1972} as one of the first examples of a numeration system that is not based on powers of integers, some deeper results have been achieved: In 2018, Drmota et al \cite{DrmotaMullnerSpiegelhofer2018} showed that for the Zeckendorf sum of digits function, the Sarnak conjecture \cite{Sarnak2012}, which asks about the orthogonality of the Möbius function with respect to certain sequences, holds. Recently, in 2025, they proved that $s_F$ is equidistributed modulo m along prime numbers \cite{DrmotaMullnerSpiegelhofer2025}, hence proving the analogue of the second Gelfond problem for the Zeckendorf sum of digits function. In the latter paper they also showed that $s_F$ has small Gowers norm and that it has level of distribution 1. 

For many other classes of functions there are results about the size of their Gowers norm. In 2012, Green, Tao and Ziegler \cite{GreenTaoZiegler2012} showed that if the Gowers norm of a complex valued 1-bounded function $f:\{0,1,\dots,N\}\to\CC$ is large then the sequence correlates with a nilsequence. Similarly, in the setting of automatic sequnences, Byszewski, Konieczny and Müllner \cite{ByszewskiKoniecznyMullner2023} proved that an automatic sequence has small Gowers norm if and only if it does not correlate with a periodic function.

Our main result gives a Gowers norm like estimate for a wide class of linearly recurrent numeration systems, namely those which satisfy the so called finiteness property (Property F), introduced by Frougny and Solomak in \cite{FrougnySolomyak1992}.
\begin{definition}\label{Thm_F}
	We say that a $\beta$-expansion satisfies Property F, if for all $x,y\in \Fin_N(\beta)$ there exists an $r$ such that $x\pm y\in \Fin_{N+r}(\beta)$. We further say that a linearly recurrent numeration system satisfies Property F if the corresponding $\beta$-expansion satisfies Property F, where $\beta$ is the dominant root of the system.
\end{definition}
Here the set $\Fin_N$ is defined as
\begin{equation}
	\Fin_N:=\{x\in \QQ^+(\beta): x=\sum_{i\geq -N}\delta_i(x)\beta^i \text{ for some $\delta_i(x)\in\NN$}\}.
\end{equation}

\begin{remark}
	Frougny and Solomak showed in \cite{FrougnySolomyak1992} that a necessary but not sufficient condition for a linearly recurrent numeration system to have Property F is that it is Pisot. Some sufficient conditions on the coefficients $a_i$ such that the numeration system has Property F are known, for example that $(a_i)$ are in descending order.
\end{remark}

Now we can state the main result of this paper.

\begin{theorem}\label{Thm1}
	Let $(G_i)_{i\in \NN}$ be a linearly recurrent numeration system satisfying Property F. Then, as $\lambda \to \infty$, we have that
	\begin{equation}\label{eqGow}
		\frac 1{G_{\lambda}^{s+1}} \sum_{n_0, n_1,\dots,n_s< G_{\lambda}}\prod_{\epsilon_1,\dots,\epsilon_k \in \{0,1\}} \mathcal{C}^{|\mathbf{\epsilon}|}\; e\left(\theta s_{G,\lambda} \left(n_0 + \sum_{i=1}^{s}n_i\epsilon_i\right)\right) \ll G_{\lambda}^{-c||\theta||^2}
	\end{equation}
	for some positive constant $c$, which may depend on $s$. Here we use the usual notation that $e(x)=\exp (2\pi i x)$.
\end{theorem}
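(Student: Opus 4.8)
The plan is to read the multiple sum in \eqref{eqGow} as (a normalisation of) the Gowers $U^{s+1}$-norm of $n\mapsto e(\theta s_{G,\lambda}(n))$ on $[0,G_\lambda)$ and, rather than iterating van der Corput in the variables $n_i$, to evaluate it directly by a transfer-operator computation that processes the $\beta$-adic expansions of $n_0,\dots,n_s$ simultaneously, one digit position at a time.

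\emph{Step 1 (reduction to a matrix product).} Two finiteness facts drive this. First, by Steiner's theorem the admissible digit strings of the numeration system form a sofic subshift, so a finite automaton decides admissibility of the expansion of an integer below $G_\lambda$. Secondly, since Property F forces $\beta$ to be Pisot, Frougny's theory supplies a finite subsequential transducer of bounded delay that normalises an arbitrary nonnegative digit string in base $\beta$; running boundedly many copies in parallel, for each subset $S\subseteq\{1,\dots,s\}$ the admissible expansion of $n_0+\sum_{i\in S}n_i$ --- and hence its sum of digits up to position $\lambda$ --- is produced from the input expansions by a single finite-state, bounded-delay pass. Since $n_0+\sum_{i\in S}n_i<(s+1)G_\lambda\le G_{\lambda+K_s}$ for some $K_s=O_s(1)$, only $K_s$ overflow positions arise and each contributes $O_s(1)$ to the digit sum. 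Assembling these, the left-hand side of \eqref{eqGow} equals $G_\lambda^{-(s+1)}\,v^{\top}M(\theta)^{\lambda}w$, where the state space records the Parry-automaton states of the $2^{s+1}$ linear forms $n_0+\sum_i n_i\epsilon_i$ together with the finitely many carry states of the associated additions; the entries of the bulk matrix $M(\theta)$ are the sums of $e(\pm\theta\cdot(\text{output digit}))$ over the admissible digit choices at a single position, with the conjugation operators $\mathcal C^{|\epsilon|}$ folded in as signs on the exponents; and the vectors $v,w$ absorb the $O_s(1)$ boundary corrections from the overflow digits and the transducer delay.

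\emph{Step 2 (spectral analysis of $M(\theta)$).} At $\theta=0$ the entries of $M(0)$ are nonnegative integers and the left-hand side of \eqref{eqGow} is identically $1$, so, as $G_\lambda\asymp\beta^\lambda$, the dominant eigenvalue of the relevant irreducible block of $M(0)$ equals $\beta^{s+1}$. For $\theta\notin\ZZ$ one has $|M(\theta)_{ab}|\le M(0)_{ab}$ entrywise, and a standard aperiodicity argument shows the inequality is strict around some cycle: equality everywhere would, by the rigidity theory of nonnegative matrices, force $\theta$ times every attainable alternating output-digit sum to lie in $\ZZ$, which contradicts the non-degeneracy of the system (the conditions $a_d>0$ and the Parry condition make enough distinct admissible digit patterns occur). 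Hence $\rho(M(\theta))<\beta^{s+1}$ for every $\theta\notin\ZZ$, and by continuity and compactness $\rho(M(\theta))\le(1-\eta)\beta^{s+1}$ whenever $\|\theta\|\ge\theta_0$, which already gives \eqref{eqGow} there. For $\|\theta\|\le\theta_0$, analytic perturbation of the simple dominant eigenvalue of $M(0)$ --- together with the second-moment computation underlying the Drmota--Gajdosik local limit law, now carried out for the full $(s+1)$-variable operator --- yields $\rho(M(\theta))=\beta^{s+1}\bigl(1-c\|\theta\|^2+o(\|\theta\|^2)\bigr)$, the linear term vanishing because $\theta\mapsto-\theta$ conjugates $M(\theta)$. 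In either range,
\[
G_\lambda^{-(s+1)}\,\bigl|v^{\top}M(\theta)^{\lambda}w\bigr|\;\ll\;\Bigl(\tfrac{\rho(M(\theta))}{\beta^{s+1}}\Bigr)^{\lambda}\;\ll\;\exp\!\bigl(-c\lambda\|\theta\|^2\bigr)\;\ll\;G_\lambda^{-c'\|\theta\|^2},
\]
the polynomial-in-$\lambda$ discrepancy between $|v^{\top}M(\theta)^{\lambda}w|$ and $\rho(M(\theta))^{\lambda}$ being absorbed into the exponent; since $c=c(s)$ depends on the numeration system through the dimension of $M$, this is exactly \eqref{eqGow}.

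\emph{Main obstacle.} The crux --- and the only place where Property F is genuinely needed --- is Step 1: guaranteeing that the joint normalisation of all $2^s$ sums is a bounded-memory, bounded-delay process, so that the state space stays finite uniformly in $\lambda$. Integer carries already propagate arbitrarily far, so one must check that the addition carries and the $\beta$-normalisation carries can be handled together in a single pass, in a reading direction compatible with the admissibility automaton, and that running the $2^s$ transducers in parallel still leaves finitely many states; this is precisely what Frougny's finite-transducer characterisation of Property F delivers, but the bookkeeping is delicate. The remaining technical points --- the strict-inequality/aperiodicity step forcing $\rho(M(\theta))<\beta^{s+1}$ for $\theta\notin\ZZ$, and the perturbative identification of $c$ for the multivariate operator --- are routine once Step 1 is in place.
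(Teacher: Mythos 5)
Your proposal takes a genuinely different route from the paper. The paper never forms a transfer operator: it fixes a window of $O_s(1)$ digit positions, shows via Property F that carries propagate only a bounded distance (Lemmas \ref{Cor_F} and \ref{carries}), and then \emph{explicitly constructs}, inside each residue class of the window, two admissible configurations $\mathbf{n}^{(1)},\mathbf{n}^{(2)}$ whose Gowers products differ by the exact phase $e((-1)^s h\theta)$ with $h=a_0-1+a_1+\dots+a_d$; this gives a saving of $4\|h\theta\|^2$ per window, which is then iterated over $\asymp\lambda$ disjoint windows by induction. Your plan replaces this by a single matrix power $v^{\top}M(\theta)^{\lambda}w$ plus Perron--Frobenius and perturbation theory, in the spirit of Drmota--Gajdosik. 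If it worked, it would be cleaner and would make the source of the constant $c$ transparent; but as written it has two genuine gaps, one of which is precisely the point the paper spends all of Proposition \ref{Prop_savings} on.

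The more serious gap is in Step 2, at the sentence ``equality everywhere would \dots contradict the non-degeneracy of the system.'' To get $\rho(M(\theta))<\beta^{s+1}$ for all $\theta\notin\ZZ$ (let alone with a quadratic rate) you must exhibit, inside the irreducible component carrying the eigenvalue $\beta^{s+1}$, two admissible transition choices whose alternating output-digit sums $\sum_{\epsilon}(-1)^{|\epsilon|}\delta_j\bigl(n_0+\sum_i\epsilon_i n_i\bigr)$ differ by integers generating $\ZZ$. This is not a soft rigidity statement: changing one input digit changes every one of the $2^s$ normalized outputs through carries, and the net alternating change is a specific integer depending on $(a_1,\dots,a_d)$. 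Producing even one pair with a \emph{known nonzero} difference is exactly the content of the paper's explicit construction of $n_i^{(1)},n_i^{(2)}$, and even there the attainable difference is the particular integer $h$, yielding a saving in terms of $\|h\theta\|$ rather than $\|\theta\|$. Your argument asserts, with no construction, something strictly stronger than what the paper proves; without it the spectral gap could simply fail at rationals $\theta$ with denominator dividing the gcd of the attainable differences.

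The second gap is in Step 1. Frougny's normalization automaton for a Pisot base is a finite two-tape automaton, but it is not in general letter-to-letter, and it is naturally subsequential in the direction of carry propagation, whereas admissibility of the inputs (the Parry condition) and the truncation $s_{G,\lambda}$ are read most-significant-digit first; moreover carries in these systems propagate in \emph{both} directions. To write the whole expression as a single power of one fixed matrix you must show the bidirectional carries can be confined to a window of bounded width in a way compatible with both reading directions and with the product over all $2^s$ forms simultaneously --- this is where Property F (not merely Pisot) enters, and it is what the paper's Lemma \ref{carries} supplies. You correctly flag this as the crux, but flagging it does not discharge it: without a proof that the joint state space is finite and the transition matrix is position-independent, the identity $\text{LHS}=G_\lambda^{-(s+1)}v^{\top}M(\theta)^{\lambda}w$ on which everything else rests is unestablished.
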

This result can also be interpreted in the setting of dynamical systems, since linearly recurrent numeration systems modulo $m$ are special morphic sequences. Hence it is a step towards the generalization of the characterization of automatic sequences regarding their Gowers norms by Byszewski, Konieczny and Müllner in 2023 \cite{ByszewskiKoniecznyMullner2023} to the setting of morphic sequences.

\begin{remark}
	We want to remark that in most cases the left-hand side of equation \eqref{eqGow} in the theorem is not a Gowers norm. This is the case only when $s_G$ is the sum of digits function for the base $b$. However, we argue that this expression is still natural to consider, as it most closely resembles the type of estimates which are needed in the results regarding the sum of digits function in base $b$ and in the Zeckendorf case. Additionally, with only minor changes one can turn the left-hand side into a proper Gowers norm. These changes do not change the method of the proof but would be unnecessarily heavy in notation for this paper, hence we omit them for clarity. However, we outline the idea of the changes here.
	
	The main change is that one cannot work directly with the function $s_G(n,\lambda)$, but one needs to use a function $g_\lambda(x)$, to be defined on the associated Rauzy fractal. The number $\lambda$ indicates the depth of the iteration we are considering of this fractal. One can show that this Rauzy fractal of depth $\lambda$ provides an analytic way of detecting the last $\lambda$ digits of a given integer in these linearly recurrent bases. Additionally, it has been shown that the Rauzy fractal provides a tiling of the space by translation, hence we can consider the function $g_\lambda(x)$ to be defined on a higher-dimensional torus. Therefore, we have an abelian group given by point-wise addition on this torus, which is needed in the definition of the Gowers norm. Replacing each sum over $n_i$ by an integral over the torus and removing the normalising weight $\frac 1 {G_{\lambda}}$ after carefully choosing the correct measure, we have now replaced the left-hand side of equation \eqref{eqGow} with a Gowers norm.
	
	Lastly, we want to remark that Property F also has a geometric interpretation in this setting. It corresponds to the condition that 0 is an inner point.
\end{remark}
 
 As an easy corollary to Theorem \ref{Thm1} we obtain an improvement of a result by Toumi \cite{Toumi2025} concerning the base $b$ sum of digits function modulo $m$.
 \begin{corollary}\label{Cor1}
 	Let $b>1$ be an integer. Then we have that
	\begin{equation}
		\frac 1{(b^{k})^s} \sum_{n_0, n_1,\dots,n_s< b^{k}}\prod_{\epsilon_1,\dots,\epsilon_k \in \{0,1\}} \mathcal{C}^{|\mathbf{\epsilon}|}\; e\left(\theta s_{b,k} \left(n_0 + \sum_{i=1}^{s}n_i\epsilon_i\right)\right) \ll (b^{k})^{-c||\theta||^2}
	\end{equation}
 	for some positive constant $c$ as $k\to \infty$.
 \end{corollary}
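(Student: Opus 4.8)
The plan is to observe that the base-$b$ numeration system is itself one of the linearly recurrent numeration systems to which Theorem~\ref{Thm1} applies, so that Corollary~\ref{Cor1} follows by a direct specialisation. Concretely, I would take $d=1$ and $a_1=b$ in the Definition of a linearly recurrent numeration system: then $G_0=1$, the recurrence $G_n=a_1G_{n-1}=bG_{n-1}$ holds for every $n\ge 1$ and produces exactly $G_i=b^i$, while the requirement ``$1<G_1<\dots<G_{d-1}$'' and the Parry condition are both over an empty index range when $d=1$ and hence impose nothing. The characteristic root of this system is $\beta=b$, a rational integer $\ge 2$, which has no Galois conjugates other than itself and is therefore (vacuously) Pisot.

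The only hypothesis requiring an actual check is Property~F, and it is immediate: by the Remark following Definition~\ref{Thm_F} it suffices that the coefficient sequence be in descending order, which the length-one sequence $(a_1)=(b)$ trivially is. One can also see it by hand: since $\beta=b\in\ZZ$ one has $\QQ^+(\beta)=\QQ^+$ and $\Fin_N(\beta)=b^{-N}\NN$, which is closed under addition and under subtraction of the smaller element from the larger, so $x\pm y\in\Fin_N(\beta)\subseteq\Fin_{N+r}(\beta)$ already with $r=0$.

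With all the hypotheses of Theorem~\ref{Thm1} verified for $(G_i)_{i\in\NN}=(b^i)_{i\in\NN}$, it remains to apply that theorem with $\lambda=k$. Then $G_\lambda=b^k$, and the truncated digit-sum $s_{G,\lambda}$ coincides with the truncated base-$b$ digit sum $s_{b,k}$, so inequality~\eqref{eqGow} yields precisely the asserted bound, with a constant $c$ depending only on $s$. I do not expect any genuine obstacle: the only point that needs a moment's care is the degenerate case $d=1$ in the definition of a linearly recurrent numeration system and in Property~F, and as noted above every relevant condition there is either trivially true or vacuous. All of the substance — in particular the uniform treatment and the explicit dependence on $\|\theta\|$ that makes this an improvement over Toumi's estimate — is already contained in Theorem~\ref{Thm1}.
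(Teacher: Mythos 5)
Your proposal is correct and is exactly the argument the paper intends: the base-$b$ system is the degenerate case $d=1$, $a_1=b$ of a linearly recurrent numeration system, Property F holds trivially (e.g.\ with $r=0$), and the corollary is Theorem~\ref{Thm1} with $G_\lambda=b^k$. The only point worth flagging is the normalisation: Theorem~\ref{Thm1} carries the weight $G_\lambda^{-(s+1)}$ while the corollary as printed has $(b^k)^{-s}$, which is evidently a typo in the paper (the Gowers-norm normalisation over $\ZZ/b^k\ZZ$ is indeed $(b^k)^{-(s+1)}$), so your direct specialisation gives the intended statement.
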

We emphasize here again that in this setting the left-hand side is indeed a Gowers norm corresponding to the abelian group $\ZZ/b^{k}\ZZ$.

\section{Prerequisites}
In this section we prove a few small results that will be useful later.

First we want to show that the finiteness property of the $\beta$ expansion corresponds to limited carries to the right in the linearly recurrent numeration system. For this we introduce a function $v(n,\lambda)$, which associates to each possible combination of the last $\lambda$ digits a unique number between $0$ and $G_{\lambda}$. Hence one can think of it as being the analogue of viewing an integer modulo $p^{\lambda}$ in the case of the base $p$ expansion. We define $v(n,\lambda)$ the following way:
\begin{equation}
	v(n,\lambda):=\sum_{i=0}^{\lambda}\delta_i(n)G_i\;.
\end{equation}

Now we get the following lemma.

\begin{lemma}\label{Cor_F}
	Let $(G_i)_{i\in\NN}$ define a linearly recurrent numeration system such that the associated $\beta$-expansion satisfies Property F. Let $x,y$ be such that $v(x,m)=v(y,m)=0$ for $m>d+r$. Then we have that $v(x\pm y,m-r)=0$.
\end{lemma}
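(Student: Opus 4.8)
The plan is to translate the statement about $v$ into one about $\beta$-expansions, apply Property F there, and translate back. The bridge is the observation that the condition $v(n,m)=0$ should correspond, after dividing by $G_m$ (equivalently multiplying by $\beta^{-m}$ in the geometric realisation), to the statement that the ``fractional part'' of $n$ in the $\beta$-numeration lives in a controlled set of the form $\mathrm{Fin}_m(\beta)$: indeed $v(n,m)=\sum_{i=0}^{m}\delta_i(n)G_i$ collects precisely the low-order digits, and $v(n,m)=0$ forces $\delta_0(n)=\cdots=\delta_m(n)=0$ (since each term is a nonnegative multiple of $G_i$ and the $G_i$ grow, a standard greedy/uniqueness argument shows a nonnegative digit combination summing to $0$ must be identically zero). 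So the hypothesis really says that $x$ and $y$ both have their first $m+1$ digits vanishing.

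Now I would pass to the $\beta$-side. Write $\tilde x = x\,\beta^{-m}$ and $\tilde y = y\,\beta^{-m}$, so that the vanishing of the low digits of $x,y$ becomes the assertion that $\tilde x,\tilde y\in\mathrm{Fin}_N(\beta)$ for a common $N$ (the digits of $\tilde x$ sit at positions $\geq -N$ for $N$ bounded in terms of the top digit of $x$ — but in fact, because only the relative position of the window matters, one can arrange $\tilde x,\tilde y\in\mathrm{Fin}_0(\beta)$, i.e. they are ``integers'' in the $\beta$-expansion with no negative-power digits). Property F then yields an $r$ (depending only on $\beta$, hence only on the numeration system) such that $\tilde x\pm\tilde y\in\mathrm{Fin}_{r}(\beta)$, i.e. $x\beta^{-m}\pm y\beta^{-m}$ has all its digits at positions $\geq -r$. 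Multiplying back by $\beta^{m}$, this says $(x\pm y)$ has no nonzero digits below position $m-r$; translating to the integer numeration system via Steiner's result that the Parry condition determines the integer representations, this is exactly $v(x\pm y,m-r)=0$. The role of the hypothesis $m>d+r$ is to guarantee that the window $[m-r,\,m]$ is long enough (width exceeding $d$) that the ``carry propagation'' triggered by the subtraction stays strictly inside the block of digits we zeroed out and does not leak up past position $m$ or produce a borrow from above; this is where one invokes the linear recurrence of depth $d$ to see that a carry can move at most $d$ positions at a time, so starting from position $m-r$ and moving up it cannot escape before position $m$.

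The main obstacle I anticipate is making the correspondence ``$v(n,m)=0 \iff$ low $\beta$-digits vanish'' fully rigorous and, more delicately, controlling the interaction between the $\mathrm{Fin}$ set on the real/geometric side and the actual digit strings of integers: a priori, $x\pm y$ computed as a real number $\beta^{m}(\tilde x\pm\tilde y)$ might be represented with digits that are admissible in the $\beta$-expansion sense but that one must re-normalise to match the integer greedy expansion, and one has to check this renormalisation (handled by Steiner's characterisation) does not shift nonzero digits down below position $m-r$ nor up beyond $m$. The bound $m>d+r$ is precisely the buffer that makes this work, so the technical heart of the proof is a careful carry-chasing argument showing the renormalisation is confined to the window of width $>d$ between positions $m-r$ and $m$.
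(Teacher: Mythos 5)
Your proposal follows essentially the same route as the paper: transplant the digit strings of $x$ and $y$ into the $\beta$-expansion, apply Property F to conclude the low digits of the sum/difference vanish, and transfer back to the $G$-system using the fact that the digit normalisation is governed by the same recurrence (Parry/Steiner). One small caution: the correct lift is $\tilde x=\sum_{n\geq m}\delta_n(x)\beta^{n}$ (transplanting digits), not $x\beta^{-m}$, since $G_n\neq\beta^n$; with that adjustment your argument matches the paper's.
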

\begin{proof}
	Let $x=\sum_{n\geq m} \delta_n(x)G_n$ and $y=\sum_{n\geq m} \delta_n(y)G_n$. Now consider their analogues in the $\beta$-expansions $\tilde x = \sum_{n\geq m} \delta_n(x)\beta^n$ and $\tilde{y} = \sum_{n\geq m} \delta_n(y)\beta^n$. By Property F, we have that the digits of their sum/difference are 0 for the lower digits, namely ${\delta_n}(\tilde{x}\pm \tilde{y})=0$ for $n\leq m-r$. Since the digits of $\tilde{x}\pm \tilde{y}$ in the $\beta$-expansion can be found just by using the recurrence relation, and $(G_i)_{i\in\NN}$ has the same recurrence relation for the digits in question, we have that $v(x\pm y,m-r)=0$.
\end{proof}

Now we can introduce the key lemma, which shows that if two digits are separated by enough zeros, then those digits cannot influence each other.

\begin{lemma}\label{carries}
	Let $x_0,\dots,x_s\in \NN$ be such that there exist some $k\in\NN$ and integers $m_i<G_k$ with $v(x_i,k+2r)=m_i$, for some $r$ large enough. Then we have for all $\ell\in \NN$:
	\begin{equation}
		\delta_{\ell}(x_0+\dots+x_s)=\delta_{\ell}(x_0+\dots+x_s-m_0-\dots-m_s)+\delta_{\ell}(m_0+\dots+m_s)
	\end{equation}
\end{lemma}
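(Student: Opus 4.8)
The plan is to split $X:=x_0+\dots+x_s$ into a low block carried by $M:=m_0+\dots+m_s$ and a high block carried by $Y:=X-M=\sum_{i=0}^{s}(x_i-m_i)$, and to show that for $r$ large enough these two blocks are separated by at least $d$ zero digits, so that the greedy expansion of $X$ is just their concatenation. First one records the elementary facts. Since $v(x_i,k+2r)=\sum_{j=0}^{k+2r}\delta_j(x_i)G_j=m_i<G_k$ and all digits are non-negative, no $G_j$ with $k\le j\le k+2r$ can occur, so $\delta_j(x_i)=0$ for $k\le j\le k+2r$; equivalently $v(x_i-m_i,k+2r)=0$ and the greedy expansion of $x_i-m_i$ is supported on indices $\ge k+2r+1$. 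On the other hand $M<(s+1)G_k$, and since $G$ grows at least geometrically there is a constant $c_1=c_1(s,G)$ with $M<G_{k+c_1}$, so the greedy expansion of $M$ is supported on indices $\le k+c_1-1$.

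Next one controls the carries to the right produced when the $x_i-m_i$ are added. Let $r_0$ denote the constant from Property~F (the ``$r$'' of Lemma~\ref{Cor_F}). Combining $x_0-m_0,\dots,x_s-m_s$ one at a time and applying Lemma~\ref{Cor_F} at each of the $s$ steps --- losing $r_0$ from the vanishing level each time --- gives $v(Y,k+2r-sr_0)=0$, valid as long as $r$ is large enough that the relevant level stays above $d+r_0$ throughout. Hence the greedy expansion of $Y$ is supported on indices $\ge k+2r-sr_0+1$. Choosing $r$ with $2r\ge sr_0+c_1+d-1$ makes $k+2r-sr_0+1\ge k+c_1+d$, so the supports of the greedy expansions of $M$ and $Y$ are separated by the $d$ consecutive zero positions $k+c_1,\dots,k+c_1+d-1$.

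It remains to see that the digit string equal to $\delta_j(M)$ for $j\le k+c_1-1$, to $\delta_j(Y)$ for $j\ge k+c_1+d$, and to $0$ in between, is the greedy expansion of $X$. It represents $X$ because the two supports are disjoint, so one only has to verify the Parry admissibility condition window by window. A window of length $d$ cannot meet both supports, since meeting both would force it to cover at least $d+2$ consecutive positions; therefore it is either a window of the greedy expansion of $M$, or a window of the greedy expansion of $Y$, or a window of the expansion of $M$ preceded by one or more zeros --- and a leading zero already makes a window lexicographically smaller than $(a_1,\dots,a_d)$, because taking $\ell=d$ in the third condition of the definition gives $a_d\le a_1$, hence $a_1>0$. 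By uniqueness of the greedy expansion this string is the expansion of $X$, and reading off its $\ell$-th digit yields $\delta_\ell(X)=\delta_\ell(M)+\delta_\ell(Y)$ for every $\ell$ (for each $\ell$ at most one of $\delta_\ell(M),\delta_\ell(Y)$ is nonzero, the other being $0$). The main obstacle, and the reason the ``$2r$'' margin is needed, is precisely this admissibility step: in contrast to base $b$, carries here travel in both directions, so one needs Property~F --- via the iterated Lemma~\ref{Cor_F} --- to stop the carries moving to the right, and a block of at least $d$ zeros to stop the carries moving to the left, both costs being absorbed into the length $\approx 2r$ of the zero block once $r$ is large in terms of $s$, $d$ and $r_0$.
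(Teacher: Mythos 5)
Your proof is correct and follows essentially the same strategy as the paper's: bound the leftward spread of $m_0+\dots+m_s$ by $(s+1)G_k<G_{k+O(1)}$, bound the rightward carries of $\sum_i(x_i-m_i)$ via Lemma~\ref{Cor_F}, and conclude that the two blocks cannot interact. You supply details the paper leaves implicit (the $s$-fold iteration of Lemma~\ref{Cor_F} and the window-by-window admissibility check for the concatenated string), but the underlying argument is the same.
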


\begin{proof}
	The difficulty lies in the fact that in a linearly recurrent numeration system, carries both to the left and to the right can occur. Hence our aim is to show that we can bound the number of  carries to the left of $m_i$ by $r-1$, and the carries to the right of $x_i-m_i$ by $r-1$.
	
	First we observe that $m_0+\dots+m_s<(s+1)G_k<G_{k+r-1}$, for $r$ large enough. Hence we see that we have limited the left carries of $m_i$ by $r-1$.	The right carries are also bounded by $r$ by Lemma  $\ref{Cor_F}$, hence this concludes the proof.
\end{proof}

\section{Proof of the main theorem}
We will now prove the main theorem by induction on $\lambda$, the number of digits. Before we can start with the induction we need to rewrite the left-hand side of equation \eqref{eqGow}.
\begin{equation}
	\frac 1{G_{\lambda}^{s+1}} \sum_{n_0, \dots,n_s< G_{\lambda}}\prod_{\epsilon_1,\dots,\epsilon_k \in \{0,1\}} \mathcal{C}^{|\mathbf{\epsilon}|}\; e\left(\theta s_{G,\lambda} \left(n_0 + \sum_{i=1}^{s}n_i\epsilon_i\right)\right) = \sum_{n_0,\dots,n_s<G_{\mu}}S_{\mu}(n_0,\dots,n_s),
\end{equation}
for some $\mu$, where
\begin{equation}
	S_{\mu}(n_0,\dots,n_s) := \frac{1}{G_{\lambda}^{s+1}}\sum_{\substack{x_0,\dots,x_s< G_{\lambda}\\v(x_i,\mu)=v(n_i,\mu)\, \forall 0\leq i\leq s}}\prod_{\epsilon_1,\dots,\epsilon_s \in \{0,1\}} \mathcal{C}^{|\mathbf{\epsilon}|}\; e\left(\theta s_{G,\lambda} \left(n + \sum_{i=1}^{s}y_i\epsilon_i\right)\right)\; .
\end{equation}
Furthermore, we see that for any $\mu'>\mu$ we have that
\begin{equation}
	S_{\mu}(n_0,\dots,n_s) = \sum_{\substack{\mathbf{n'}<G_{\mu'}\\v(\mathbf{n'},\mu)=\mathbf{n}}}S_{\mu'}(n_0',\dots,n_s')\;.
\end{equation}
We are interested in the number of summands of the above identity. Let $S:=\{(\delta_{\mu-d+1},\dots,\delta_{\mu}):(\delta_{\mu-d+1},\dots,\delta_{\mu})\leq(a_{1},\dots,a_{d})\}$. Let $\delta_i'\in S$ and $\boldsymbol{\delta'}=(\delta_1',\dots,\delta_d')$. Moreover let
\begin{align}
	M(\mu,\mu',\mathbf{n},\boldsymbol{\delta'}) &:= \{0\leq \mathbf{n'}<G_{\mu'}^{s+1}: (\delta_{\mu'}(\mathbf{n'}),\dots,\delta_{\mu'-d}(\mathbf{n'}))=\boldsymbol{\delta'}, v(\mathbf{n'},\mu)=\mathbf{n}\}\\
	\label{sizeN}N(\mu,\mu',\mathbf{n},\boldsymbol{\delta'}) &:= |M(\mu,\mu',\mathbf{n},\boldsymbol{\delta'})|<\prod_{i=0}^{s}G_{\mu'-\mu}<c^{s+1}\beta^{(\mu'-\mu)(s+1)},
\end{align}
where $\beta$ is the dominant root of the recurrence and $c$ is some positive constant.

We have now introduced the relevant notation for the induction. For the sake of keeping the main proof concise, we first show a proposition which will be the main ingredient in the induction step later on.

\begin{proposition}\label{Prop_savings}
	For $\mu\leq \lambda-3r-(s+2)d-1$, let $\mu'=\mu+3r+(s+2)d+1$. Then we have that
	\begin{equation}
		|S_{\mu}(\mathbf{n})| \leq \left(1-\frac{4||h\theta||^2}{c^{s+1}\beta^{(\mu'-\mu)(s+1)}}\right) \sum_{\boldsymbol{\delta'}\in S^{s+1} }N(\mu,\mu',\mathbf{n},\boldsymbol{\delta'}) \times \max_{\mathbf{n'}\in M(\mu,\mu',\mathbf{n},\boldsymbol{\delta'})}|S_{\mu'}(\mathbf{n'})|\;,
	\end{equation}
	where $h=a_0-1+a_1+\dots+a_d$.
\end{proposition}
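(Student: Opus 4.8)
Here is how I would approach Proposition~\ref{Prop_savings}. The plan is to peel off, from the $\mu'-\mu=3r+(s+2)d+1$ newly revealed digit positions, one short window $W$ of $d$ digits that is screened off by zero buffers on both sides, to extract honest cancellation over that window, and to bound everything else trivially; the three blocks of $\approx r$ zeros are exactly what makes Lemma~\ref{Cor_F} and Lemma~\ref{carries} applicable, and the extra $(s+1)d$ positions leave each of the $s+1$ coordinates room to carry without reaching $W$. First I would unfold
\[
S_\mu(\mathbf n)=\sum_{\boldsymbol{\delta'}\in S^{s+1}}\ \sum_{\mathbf n'\in M(\mu,\mu',\mathbf n,\boldsymbol{\delta'})}S_{\mu'}(\mathbf n'),
\]
so that the crude triangle inequality already gives $\sum_{\boldsymbol{\delta'}}N\max_M|S_{\mu'}|$ and the whole content is to gain the stated factor. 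Fix $W$ near the middle of the new block, with at least $2r$ zero positions reserved above it and (together with the zero forced at position $\mu$ by $v(n_i,\mu)=n_i<G_\mu$) at least $2r$ below it; call $\mathbf n'$ \emph{good} if every coordinate of $\mathbf n'$ has all of these buffer digits equal to $0$. A count as in \eqref{sizeN} shows the good $\mathbf n'$ are a proportion $\gg\beta^{-(\mu'-\mu)(s+1)}$ of each fibre. Writing $I_\epsilon=\{0\}\cup\{1\le i\le s:\epsilon_i=1\}$, the exponential in $S_{\mu'}$ involves $s_{G,\lambda}\big(\sum_{i\in I_\epsilon}x_i\big)$.

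On the good configurations I would apply Lemma~\ref{carries} twice to each of the $2^s$ sums $\sum_{i\in I_\epsilon}x_i$: once with the lower buffer, to split off the bottom part (which reconstructs to $\sum_{i\in I_\epsilon}v(n_i,\mu)$), and once with the upper buffer, to split off $W$ (legitimate since $s+1$ admissible $d$-blocks sum to less than $G_{2r+d}$ for $r$ large). This yields, for every $\epsilon$,
\[
s_{G,\lambda}\Big(\sum_{i\in I_\epsilon}x_i\Big)=s_{G,\lambda}(\mathrm{high}_\epsilon)+s_G\Big(\sum_{i\in I_\epsilon}v(B_i)\Big)+s_G\Big(\sum_{i\in I_\epsilon}v(n_i,\mu)\Big),
\]
where $B_i$ is the $W$-block of the $i$-th coordinate and the first term does not depend on $(B_0,\dots,B_s)$. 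Hence $S_{\mu'}(\mathbf n')$ factors as a fixed unit phase depending only on $\mathbf n$, times $\prod_\epsilon\mathcal C^{|\epsilon|}e\big(\theta\,s_G(\sum_{i\in I_\epsilon}v(B_i))\big)$, times a quantity independent of the window digits; in particular $|S_{\mu'}(\mathbf n')|$ is constant as $\mathbf n'$ varies over $W$ within a good group, and summing over $W$ pulls out the factor $\sum_{B_0,\dots,B_s}\prod_\epsilon\mathcal C^{|\epsilon|}e\big(\theta\,s_G(\sum_{i\in I_\epsilon}v(B_i))\big)$.

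It then remains to bound this inner sum by $N_0^{s+1}(1-4\|h\theta\|^2)$ in modulus, where $N_0$ is the number of freely placeable $d$-blocks. Summing first over $B_0$ and collapsing the product over $\epsilon$ to a single exponential reduces it to $\big|\sum_b e(\theta\Psi(b))\big|$, with $\Psi$ an additive translate of the $s$-fold finite difference $\Delta_{v(B_1)}\cdots\Delta_{v(B_s)}s_G$ evaluated at $b=v(B_0)$; since the lower buffer trivialises the Parry constraint at the bottom of $W$, the distinguished digit there runs through a full interval and the largest digit-sum jump it can propagate across $W$ is $h=a_0-1+a_1+\dots+a_d$, so a Dirichlet-kernel estimate delivers the factor $1-4\|h\theta\|^2$ uniformly in $\theta$ and in $B_1,\dots,B_s$. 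Feeding this back, each good group contributes $(1-4\|h\theta\|^2)$ times its share of $\sum_{\mathbf n'}|S_{\mu'}(\mathbf n')|$, the remaining configurations are kept by the triangle inequality, and collecting terms — using $\sum_{\mathbf n'}|S_{\mu'}|\le\sum_{\boldsymbol{\delta'}}N\max_M|S_{\mu'}|$ together with $\prod_iG_{\mu'-\mu}\le c^{s+1}\beta^{(\mu'-\mu)(s+1)}$ from \eqref{sizeN} — produces a deficit of at least $\tfrac{4\|h\theta\|^2}{c^{s+1}\beta^{(\mu'-\mu)(s+1)}}\sum_{\boldsymbol{\delta'}}N\max_M|S_{\mu'}|$, which is the assertion.

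I expect the real difficulty to sit in two places. The first is the carry bookkeeping: verifying that the two applications of Lemma~\ref{carries} hold simultaneously for all $2^s$ indices $\epsilon$ and for all digit positions up to the truncation at $\lambda$, and that the good event genuinely has density of the stated order (so that the deficit survives dividing by $N$). The second, and the crux, is the one-dimensional exponential-sum bound with the constant $4$ and the correct $h$: the Parry admissibility constraints block any naive tensorisation of $e(\theta s_G)$ over digit positions, so one must isolate a single digit whose carry behaviour across $W$ is controllable, run the Dirichlet-kernel argument only there, and absorb the admissibility of the neighbouring digits into the constants.
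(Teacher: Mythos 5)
Your overall skeleton agrees with the paper's: unfold $S_\mu(\mathbf n)$ over the fibres $M(\mu,\mu',\mathbf n,\boldsymbol{\delta'})$, insert zero buffers so that Lemmas \ref{Cor_F} and \ref{carries} decouple the top digits, a middle region, and the fixed bottom digits, bound all but a $\beta^{-(\mu'-\mu)(s+1)}$-proportion of the fibre by the triangle inequality, and extract the saving from the middle region. The gap is in the one step that carries all the content: your claim that, after fixing $B_1,\dots,B_s$, the one-dimensional sum $\sum_{B_0}e(\theta\Psi(B_0))$ with $\Psi=\pm\Delta_{v(B_1)}\cdots\Delta_{v(B_s)}s_G$ is at most $N_0(1-4\|h\theta\|^2)$ \emph{uniformly} in $B_1,\dots,B_s$. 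This is false: for $B_1=\dots=B_s=0$ the $s$-fold difference vanishes identically and the sum equals $N_0$. Worse, the failure is not confined to degenerate tuples. Take base $2$, $s=2$, a shared one-digit window, $\theta=1/2$: for $(B_1,B_2)\neq(1,1)$ the phase is identically $0$, and for $(B_1,B_2)=(1,1)$ one gets $e(-\theta)+e(\theta)$, of modulus $2=N_0$. So \emph{every} fibre of your reduction saturates the trivial bound, and your method returns no saving at all even though $\|h\theta\|=1/2$ is maximal. (The full triple sum does equal $6-2=4<8$, but that cancellation occurs \emph{between} different $(B_1,B_2)$-blocks, which your triangle inequality applied after the $B_0$-sum destroys.) The same example shows why "the largest digit-sum jump is $h$" is not enough: the jump you actually produce can be an even multiple of $h$, which is invisible to $e(\theta\,\cdot)$ for the relevant $\theta$. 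A secondary structural problem is that you place all $s+1$ window blocks at the same $d$ positions, whereas the width $\mu'-\mu=3r+(s+2)d+1$ in the statement is there precisely because $(s+1)$ \emph{disjoint} $d$-blocks are needed.

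The paper resolves exactly this point by an explicit construction rather than an averaging argument. In each class $M(\mu,\mu',\mathbf n,\boldsymbol{\delta'})$ it exhibits just two tuples $\mathbf n^{(1)},\mathbf n^{(2)}$, agreeing in all coordinates except the $0$-th, whose middle blocks are staggered nearly-full strings $(a_1-1,a_2,\dots,a_d-1,1)$ occupying $s+1$ consecutive disjoint ranges of $d$ positions; the two choices for the $0$-th coordinate are the two canonical writings related by $G_{m+1}=a_1G_m+\dots+a_dG_{m-d+1}$. The staggering is engineered so that the carry chain across all blocks completes precisely when $\epsilon_1=\dots=\epsilon_s=1$, in which case the truncated digit sums of $n_0^{(j)}+\sum_i\epsilon_in_i^{(j)}$ differ by exactly $h$ (not a multiple of it), and agree for every other $\epsilon$. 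This yields the clean identity $S_{\mu'}(\mathbf n^{(1)})=S_{\mu'}(\mathbf n^{(2)})e((-1)^sh\theta)$, whence $|1+e((-1)^sh\theta)|\leq 2-4\|h\theta\|^2$ gives the saving from this single pair out of $N<c^{s+1}\beta^{(\mu'-\mu)(s+1)}$ terms. To repair your argument you would need to exhibit, within your good set, a pairing of whole tuples $(B_0,\dots,B_s)$ realising an $s$-fold digit-sum discrepancy of exactly $h$ in the all-ones term only; that is precisely the construction you have left implicit, and it is where the Parry condition and the hypothesis on the block width enter.
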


\begin{proof}
	Recall that
	\begin{equation}
	|S_{\mu}(\mathbf{n})|= \left|\sum_{\substack{\mathbf{n'}<G_{\mu'}\\ v(\mathbf{n'},\mu)=\mathbf{n}}}S_{\mu'}(\mathbf{n'})\right|\;.
	\end{equation}
	Grouping the summands in terms of the $M$ and using the triangle inequality we get that
	\begin{equation}
		|S_{\mu}(\mathbf{n})|= \sum_{\boldsymbol{\delta'}\in S^{s+1} }\left|\sum_{\mathbf{n'}\in M(\mu,\mu',\mathbf{n}, \boldsymbol{\delta'})}S_{\mu'}(\mathbf{n'})\right|\;.
	\end{equation}
	Hence it suffices to construct for each $\boldsymbol{\delta'}$ integers $n_i^{(1)},n_i^{(2)}\in M(\mu,\mu',\mathbf{n},\boldsymbol{\delta'})$ which cancel at least partially. Let each number consist of 3 sections. The leftmost digits are given by the set $M$, the rightmost digits are the digits $n_s$ which we cannot influence and the digits in the middle section are chosen freely. Furthermore, we separate each of these sections with enough zeros, such that there are no carries when adding up $s+1$ of these numbers. We now construct these numbers explicitly:
	\begin{align*}
		n_s^{(1)} = n_s^{(2)} =& \sum_{j=0}^{d}(\delta_i')_{d-j}{d}G_{\mu'-1-j}+(a_1-1)G_{\mu+2r+d}+a_2G_{\mu+2r+d-1} + \dots+a_dG_{\mu+2r+1}+n_s \\
		n_i^{(1)} = n_i^{(2)} =& \sum_{j=0}^{d}(\delta_i')_{d-j}{d}G_{\mu'-1-j}+(a_1-1)G_{\mu+2r+d(s+1-i)}+a_2G_{\mu+2r+d-1} +\dots + \\
		&(a_d-1)G_{\mu+2r+1+d(s-i)} + G_{\mu+2r+d(s-i)}+n_i \text{ for } 1\leq i\leq s-1\\
		n_0^{(1)} =& \sum_{j=0}^{d}(\delta_i')_{d-j}{d}G_{\mu'-1-j}+(a_1)G_{\mu+2r+d(s+1)}+a_2G_{\mu+2r+d-1} + a_2G_{\mu+2r+d-1} +\dots + \\
		&(a_d-1)G_{\mu+2r+1+d(s)} + G_{\mu+2r+d(s)}+n_0 \\
		n_0^{(2)} =& \sum_{j=0}^{d}(\delta_i')_{d-j}{d}G_{\mu'-1-j}+ G_{\mu+2r+d(s+1)+1} + (a_1-1)G_{mu+2r+d(s+1)}+\dots + \\ &(a_d-1)G_{\mu+2r+1+d(s)} + G_{\mu+2r+d(s)}+ n_0 \\
	\end{align*}
	Following similar arguments as in the proof of Proposition 2.1 in Steiner \cite{Steiner2000}, we infer that the assumption $(a_{\ell},\dots,a_d)\leq (a_1,\dots,a_{d-\ell+1})$ implies that all of the above integers are written in the canonical base $G$ expansion.
	
	Using Lemma \ref{carries}, we see that by construction there are no carries between any of the three sections, hence we can treat each of them separately. Also note that the first and the last section of $\mathbf{n^{(1)}},\mathbf{n^{(2)}}$ are equal, hence their sum of digits is equal when restricting to those sections. Therefore we can evaluate their contribution trivially and it suffices to study the differences that appear in the values of the sum of digits function for the middle sum.
	
	By construction, we have that
	\begin{equation}
		 s_G\left(n_0^{(1)}+\sum_{i=1}^s\epsilon_in_0^{(1)},\mu'\right)= s_G\left(n_0^{(2)}+\sum_{i=1}^s\epsilon_in_0^{(2)},\mu'\right)
	\end{equation}
	if and only if not all of the $\epsilon_i$ are $1$.\\
	In the latter case we see that in each of the middle blocks the difference between the values of the sum of digits function is equal to $a_0-1+a_1+\dots+a_i$ for some $i$. Since all blocks are independent of each other, we see that by construction
	\begin{equation}
		s_G\left(n_0^{(1)}+\sum_{i=1}^s\epsilon_in_0^{(1)},\mu'\right)= s_G\left(n_0^{(2)}+\sum_{i=1}^s\epsilon_in_0^{(2)},\mu'\right)+h\; .
	\end{equation}
	For $n^{(1)}$, the expression evaluates to $ G_{\mu+2r+d(s+1)+1}$, whereas for $n^{(2)}$, we are left with $G_{\mu+2r+d(s+1)+1} + (a_1-1)G_{mu+2r+d(s+1)}+\dots+(a_d)G_{\mu+2r+1+d(s)}$ in the middle.
	Hence combining all calculations yields:
	\begin{equation}
	S_{\mu'}(\mathbf{n^{(1)}})=S_{\mu'}(\mathbf{n^{(2)}})e((-1)^sh\theta)\;.
	\end{equation}
	Using the estimate that $|1+e((-1)^sh\theta)|\leq 2|cos(\pi ||h\theta||)| \leq 2-4||h\theta||^2$, we get
	\begin{equation}
	\left|\sum_{\mathbf{n'}\in M(\mu,\mu',\mathbf{n}, \boldsymbol{\delta'})}S_{\mu'}(\mathbf{n'})\right| \leq (N(\mu,\mu',\mathbf{n})-4||h\theta||^2)\max_{\mathbf{n'}\in M(\mu,\mu',\mathbf{n})}|S_{\mu'}(\mathbf{n'})|\;.
\end{equation}
	Therefore, using that $N(\mu,\mu',\mathbf{n},\boldsymbol{\delta'})<c^{s+1}\beta^{(\mu'-\mu)(s+1)}$, the statement follows.	
\end{proof}

Now we can finally prove Theorem \ref{Thm1}. 

\begin{proof}[Proof of Theorem \ref{Thm1}]
	First note that for fixed $\mu,\mu'$ and $\boldsymbol{\delta'}$, the quantity $N(\mu,\mu',\mathbf{n},\boldsymbol{\delta'})$ depends only on the value of $\delta_{\mu}(\mathbf{n},\dots,\delta_{\mu-d}(\mathbf{n}))=\boldsymbol{\delta}$, hence we use the notation $N(\mu,\mu',\boldsymbol{\delta},\boldsymbol{\delta'})$ interchangeably with $N(\mu,\mu',\mathbf{n},\boldsymbol{\delta'})$.

	Now we write $\lambda=(3b+3)r\lambda' + k$, where $k\leq (3b+3)r-1$ and we want to prove the following statement by induction on $\ell$:
	\begin{align*}
		\frac 1{G_{\lambda}^{s+1}} &\sum_{n_0, n_1,\dots,n_s< G_{\lambda}}\prod_{\epsilon_1,\dots,\epsilon_k \in \{0,1\}} \mathcal{C}^{|\mathbf{\epsilon}|}\; e\left(\theta s_{G,\lambda} \left(n_0 + \sum_{i=1}^{s}n_i\epsilon_i\right)\right)\\
		\leq &\left(1-\frac{4||h\theta||^2}{c^{s+1}\beta^{((3b+3)r)(s+1)}}\right)^{\ell} \sum_{n_0,\dots,n_s<G_{k}}\sum_{\boldsymbol{\delta'}\in S^{s+1} }N(k,k+\ell((3b+2 )r+4),\mathbf{n},\boldsymbol{\delta'})\\
		&\times \max_{\mathbf{n'}\in M(k,k+\ell(3b+3)r,\mathbf{n},\boldsymbol{\delta'})} |S_{k+\ell(3b+3)r}(\mathbf{n'})|
	\end{align*}
	
	The induction base $\ell = 0$ clearly holds, since
	\begin{equation}
		N(k,k,\mathbf{n},\boldsymbol{\delta'})= \begin{cases}
			1,& \text{if } (\delta_{k}(\mathbf{n}),\dots,\delta_{k-d}(\mathbf{n}))=\boldsymbol{\delta'}\\
			0,& \text{otherwise}.
		\end{cases}
	\end{equation}
	Now suppose that the statement holds for $\ell<\lambda'$. Applying Proposition \ref{Prop_savings}, we find that
	\begin{align*}
		&|S_{k+\ell((3b+3)r)}(\mathbf{n'})|\\
		&\leq \left(1-\frac{||h\theta||^2}{c^{s+1}\beta^{((3b+3)r)(s+1)}}\right) \sum_{\boldsymbol{\delta''}\in S^{s+1}} N(k+\ell(3b+3)r,k+(\ell+1)((3b+3)r),\mathbf{n'},\boldsymbol{\delta''})\\ 
		&\hspace{0.3cm}\times \max_{\mathbf{n''}\in M(k+\ell((3b+3)r),k+(\ell+1)(3b+3)r,\mathbf{n'},\boldsymbol{\delta''})}
		|S_{k+(\ell+1)(3b+3)r}(\mathbf{n''})|\\
		&\leq \left(1-\frac{||h\theta||^2}{c^{s+1}\beta^{((3b+3)r)(s+1)}}\right) \sum_{\boldsymbol{\delta''}\in S^{s+1}} N(k+\ell(3b+3)r,k+(\ell+1)((3b+3)r),\mathbf{n'},\boldsymbol{\delta''})\\
		&\hspace{0.3cm}\times \max_{\mathbf{n''}\in M(k,k+(\ell+1)((3b+3)r),\mathbf{n},\boldsymbol{\delta''})} 
		|S_{k+(\ell+1)(3b+3)r}(\mathbf{n''})|\;.
	\end{align*}
	Combining this with the following identity finishes the induction step:
	\begin{align*}
		\sum_{\boldsymbol{\delta'}\in S^{s+1}} &N(k,k+\ell((3b+3)r),\mathbf{n},\boldsymbol{\delta'})\;N(k+\ell((3b+3)r),k+(\ell+1)((3b+3)r,\boldsymbol{\delta'},\boldsymbol{\delta''}))\\
		&=N(k,k+(\ell+1)((3b+3)r),\mathbf{n},\boldsymbol{\delta''})
	\end{align*}
	
	To finish the proof of the theorem, we estimate $|S_\lambda(\mathbf{n'})|$ trivially:
	\begin{equation}
		|S_\lambda(\mathbf{n'})| \leq \frac{1}{G_{\lambda}^{s+1}}.
	\end{equation}
	Combining this with equation \eqref{sizeN}, we have
	\begin{equation}
		|S_\lambda(\mathbf{n'})|N(k,\lambda,\mathbf{n},\boldsymbol{\delta''}) \leq \prod_{i=0}^s \frac{c^{s+1}\beta^{((3b+3)r)}}{G_{\lambda}}\;,
	\end{equation}
	which we can bound by an absolute constant (which might depend on $s$). Therefore, we have that
	\begin{align*}
		\frac 1{G_{\lambda}^{s+1}} &\sum_{n_0, n_1,\dots,n_s< G_{\lambda}}\prod_{\epsilon_1,\dots,\epsilon_k \in \{0,1\}} \mathcal{C}^{|\mathbf{\epsilon}|}\; e\left(\theta s_{G,\lambda} \left(n_0 + \sum_{i=1}^{s}n_i\epsilon_i\right)\right)\\
		 &\leq
		\left(1-\frac{||h\theta||^2}{c^{s+1}\beta^{((3b+3)r)(s+1)}}\right)^{\lambda'} \sum_{n_0,\dots,n_s<G_{k}}\sum_{\boldsymbol{\delta'}\in S^{s+1}}N(k,\lambda,\mathbf{n},\boldsymbol{\delta'})
		\times \max |S_{\lambda}(\mathbf{n'})|\\
		&\ll_s \left(1-\frac{||h\theta||^2}{c^{s+1}\beta^{((3b+3)r)(s+1)}}\right)^{\lambda'}\\
		&\ll \exp(-\tilde{c}(s)\lambda||h\theta||^2)\;,
	\end{align*}
	which concludes the proof.
\end{proof}
\printbibliography
\end{document}